\definecolor{abelian}{cmyk}{0.50,0,1,.4}
\definecolor{noabelian}{cmyk}{0.94,0.54,0,0}
\definecolor{rojo}{cmyk}{0,1,1,0}
\definecolor{verde}{cmyk}{0.91,0,0.88,0.12}
\newtheorem{theorem}{Theorem}[section]
\theoremstyle{definition}
\newtheorem{definition}[theorem]{Definition}
\newtheorem{question}[theorem]{Question}
\theoremstyle{remark}
\numberwithin{equation}{section}
\newcommand{\op}{\operatorname}
\newcommand{\ben}{\begin{equation}}
\newcommand{\een}{\end{equation}}
\newcommand{\bena}{\begin{equation*}}
\newcommand{\eena}{\end{equation*}}
\newcommand{\Tot}{\longmapsto}
\def\ZZ{\mathbb{Z}}
\begin{document}

\title{Numerical computations in cobordism categories}

\author{Carlos Segovia}
\address{Mathematisches Institut, Universit\"at Heidelberg, Deutschland}
\curraddr{}
\email{csegovia@mathi.uni-heidelberg.de}
\thanks{}


\subjclass{}
\date{13.03.2013}


\begin{abstract} The sequence 2,5,15,51,187,... with the form $(2^n+1)(2^{n-1}+1)/3$ has two interpretations in terms of the density of a language with four letters \cite{mor} and the cardinality of the quotient of $\ZZ_2^n\times \ZZ_2^n$ under the action of the special linear group $\op{SL}(2,\ZZ)$. The last interpretation follows the rank of the fundamental group of the $\ZZ_2^n$-cobordism category in dimension 1+1, see \cite{carlos1}. This article presents how to pass from one side to another between these two approaches.
\end{abstract}

\maketitle

\section*{Introduction}
It was a knock out when after writing the sequence 2,5,15,51,187,715,... in the page \cite{oeis}, we found a great variety of different interpretations for it. This sequence was presented to the author for first time in some numerical computations in his PhD dissertation, see \cite{carlos1}\footnote{The group $\ZZ_2$ is the only group with just two elements.}. There exists more that four contrasted approaches, each one inside a totally dissimilar subject and we use the following two paragraphs to give a description of two of them. Finally, in the next section we give a natural connection between them in terms of the binary numeral system and we state an open question.

The density of a language with four letters is defined as follows. Take the number of words of length $n$ made with letters 1,2,3,4 with the property that numbered from left to right each letter satisfies $0<a_i\leq \op{max}_{j\leq i}\{a_j\}+1$. Thus the first letter is always 1, so we can dismiss it. For example, for $n=2$ there are two words $1$ and $2$, for $n=3$ the words are $11$, $12$, $21$, $22$, $23$, while for $n=4$ we have 15 words
\begin{center}\parbox{0cm}{
\begin{tabbing}
  11111\= 11111 \= 11111 \= 11111 \= 11111 \kill
  111 \> 112 \> 121 \> 122 \> 123 \\
  211 \> 212 \> 213 \> 221 \> 222 \\
  223 \> 231 \> 232 \> 233 \> 234\,.
\end{tabbing}}
\end{center}

The second point considers the cardinality of the quotient of $\ZZ_2^n\times \ZZ_2^n$ under the action of the special linear group $\op{SL}(2,\ZZ)$. This group is generated by two matrices
which produce essentially two basic equations $(g,k)\sim (k,-g)$ and $(g,k)\sim (g,k+mg)$. The orbits of this quotient gives a set of generators for the monoid of principal $\ZZ_2^n$-bundles over closed surfaces with two boundary circles up to a homeomorphism identification, see \cite{carlos1}. If you do not understand the last commentary you could go ahead and ignore it. For $n=1$, we get two orbits $(0,0)$ and $(0,1)\sim(1,0)\sim(1,1)$. For $n=2$, we get 5 orbits
\begin{minipage}{8cm}
\begin{enumerate}
\item
$\left(
  \begin{array}{cc}
    0 & 0 \\
    0 & 0 \\
  \end{array}
\right)$
\item
$\left(
  \begin{array}{cc}
    0 & 0 \\
    0 & 1 \\
  \end{array}
\right)\sim \left(
  \begin{array}{cc}
    0 & 0 \\
    1 & 0 \\
  \end{array}
\right)\sim \left(
  \begin{array}{cc}
    0 & 0 \\
    1 & 1 \\
  \end{array}
\right)$
\end{enumerate}
\end{minipage}
\   \
\begin{minipage}{8cm}
\begin{itemize}
\item[(3)]
$\left(
  \begin{array}{cc}
    0 & 1 \\
    0 & 0 \\
  \end{array}
\right)\sim \left(
  \begin{array}{cc}
    1 & 0 \\
    0 & 0 \\
  \end{array}
\right)\sim \left(
  \begin{array}{cc}
    1 & 1 \\
    0 & 0 \\
  \end{array}
\right)$
\item[(4)]
$\left(
  \begin{array}{cc}
    0 & 1 \\
    0 & 1 \\
  \end{array}
\right)\sim \left(
  \begin{array}{cc}
    1 & 0 \\
    1 & 0 \\
  \end{array}
\right)\sim \left(
  \begin{array}{cc}
    1 & 1 \\
    1 & 1 \\
  \end{array}
\right)$
\end{itemize}
\end{minipage}
\begin{itemize}
\item[(5)]
$\left(
  \begin{array}{cc}
    0 & 1 \\
    1 & 1 \\
  \end{array}
\right)\sim
\left(
  \begin{array}{cc}
    1 & 1 \\
    1 & 0 \\
  \end{array}
\right)
\sim
\left(
  \begin{array}{cc}
    1 & 0 \\
    1 & 1 \\
  \end{array}
\right)\sim
\left(
  \begin{array}{cc}
    1 & 1 \\
    0 & 1 \\
  \end{array}
\right)\sim
\left(
  \begin{array}{cc}
    0 & 1 \\
    1 & 0 \\
  \end{array}
\right)\,.$
\end{itemize}
Note that the easiest way to define the sequence $2,5,15,51,187,715,...$ is as the density of a language with four letters.

\section{Main constructions}
Denote the two approaches of the last section as follows:
\begin{enumerate}
\item\label{e1} the density of a language with four letters, and
\item\label{e2} the cardinality of the quotient of $\ZZ_2^n\times \ZZ_2^n$ under the action of the special linear group $\op{SL}(2,\ZZ)$.
\end{enumerate}
Now we define an application from \eqref{e1} to \eqref{e2}. This is defined for a word $a_1a_2...a_n$ by the binary representation for $a_i=2,3\Tot 10,11$; while for $a_i=1,4$ we should take the assignations $00$, $01$ respectively. As an illustration for $n=2$, we have the assignations
\bena
\begin{array}{ccccc}
  \begin{array}{c}
    1 \\
    1
  \end{array}
  \longmapsto
  \left(
       \begin{array}{cc}
         0 & 0 \\
         0 & 0
       \end{array}
     \right)
   &
   \begin{array}{c}
    1 \\
    2
  \end{array}
  \longmapsto
  \left(
       \begin{array}{cc}
         0 & 0 \\
         1 & 0
       \end{array}
     \right)
   &
   \begin{array}{c}
    2 \\
    1
  \end{array}
  \longmapsto
  \left(
       \begin{array}{cc}
         1 & 0 \\
         0 & 0
       \end{array}
     \right)
   &
  \begin{array}{c}
    2 \\
    2
  \end{array}
  \longmapsto
  \left(
       \begin{array}{cc}
         1 & 0 \\
         1 & 0
       \end{array}
     \right)
   &
   \begin{array}{c}
    2 \\
    3
  \end{array}
  \longmapsto
  \left(
       \begin{array}{cc}
         1 & 0 \\
         1 & 1
       \end{array}
     \right)\,.

\end{array}
\eena
When we have words with a letter with value 4 we forget the first two zeros as follows
\bena
\begin{array}{c}
    2 \\
    3 \\
    4
  \end{array}
  \longmapsto
         \begin{array}{cccc}
         & & 1 & 0 \\
         & & 1 & 1 \\
         0& 1 & 0 & 0
       \end{array}
  \longmapsto
     \left(
       \begin{array}{cccc}
          1 & 0 \\
          1 & 1 \\
          0 & 1
       \end{array}
     \right)\,.
\eena
This application is surjective since the identification in \eqref{e2} is just column operations, which is used to transform every element by one which is the image of a word from \eqref{e1}. In \cite{mor} it is proved that the density of a language with four letters has the value $(2^n+1)(2^{n-1}+1)/3$. We prove in theorem \ref{a1} that for the second approach we have the same value. Thus this application between these two approaches should be a bijection.

\begin{definition}
For an abelian group $G$ denote $r(G)$ the cardinality of the quotient of $G\times G$ up to the identification generated by the equations $(g,k)\sim (k,-g)$ and $(g,k)\sim(g,k+mg)$.
\end{definition}

\begin{theorem}\label{a1} For the group $\ZZ_p^n$, with $p$ a prime number, we have the
identity \bena
r(\ZZ_p^n)=\frac{p^{2n-1}+p^{n+1}-p^{n-1}+p^2-p-1}{p^2-1}\,.\eena
\end{theorem}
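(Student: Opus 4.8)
The plan is to recast $r(\ZZ_p^n)$ as an orbit count for a linear action and then stratify by rank. Since $p$ is prime, $\ZZ_p=\mathbb{F}_p$ is a field and $G=\ZZ_p^n$ is an $n$-dimensional $\mathbb{F}_p$-vector space. A pair $(g,k)\in G\times G$ is the same datum as an $n\times 2$ matrix over $\mathbb{F}_p$ with columns $g$ and $k$, and the two relations $(g,k)\sim(k,-g)$ and $(g,k)\sim(g,k+mg)$ are exactly right multiplication by the generators $\left(\begin{smallmatrix}0&-1\\1&0\end{smallmatrix}\right)$ and $\left(\begin{smallmatrix}1&m\\0&1\end{smallmatrix}\right)$ of $\op{SL}(2,\ZZ)$. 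Because $G$ is annihilated by $p$, the action factors through $\op{SL}(2,\mathbb{F}_p)$, and these two matrices generate $\op{SL}(2,\mathbb{F}_p)$. Hence $r(\ZZ_p^n)$ is the number of orbits of $n\times 2$ matrices over $\mathbb{F}_p$ under right multiplication by $\op{SL}(2,\mathbb{F}_p)$.

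First I would note that the column span $W=\langle g,k\rangle\subseteq\mathbb{F}_p^n$ is preserved by the action, so the rank ($\dim W\in\{0,1,2\}$) is an orbit invariant and the orbits can be counted rank by rank. The rank-zero stratum is the single zero matrix, contributing one orbit. For rank one the columns lie on a line $L$; writing $g=av$, $k=bv$ for a fixed spanning vector $v$ of $L$, the action moves the coefficient row $(a,b)\in\mathbb{F}_p^2\setminus\{0\}$ by the standard $\op{SL}(2,\mathbb{F}_p)$-action, which is transitive on nonzero vectors. Thus each line yields exactly one orbit, and the number of lines in $\mathbb{F}_p^n$ is $(p^n-1)/(p-1)$.

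The main step is the rank-two stratum, where $g,k$ form an ordered basis of a $2$-dimensional subspace $W$. The ordered bases of $W$ form a torsor under $\op{GL}(2,\mathbb{F}_p)$ acting on the right, and two bases lie in one $\op{SL}(2,\mathbb{F}_p)$-orbit precisely when they differ by a determinant-one matrix; equivalently the wedge $g\wedge k\in\Lambda^2 W\setminus\{0\}$ is a complete invariant, since $\op{SL}(2,\mathbb{F}_p)$ is transitive on bases with a prescribed value of $g\wedge k$. As $\Lambda^2 W\setminus\{0\}$ has $p-1$ elements, each plane contributes $p-1$ orbits. The number of $2$-dimensional subspaces of $\mathbb{F}_p^n$ is the Gaussian binomial $\frac{(p^n-1)(p^{n-1}-1)}{(p^2-1)(p-1)}$, so the rank-two contribution is $\frac{(p^n-1)(p^{n-1}-1)}{p^2-1}$.

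Summing the three strata gives
\bena
r(\ZZ_p^n)=1+\frac{p^n-1}{p-1}+\frac{(p^n-1)(p^{n-1}-1)}{p^2-1},
\eena
and it then remains only to clear denominators over $p^2-1$ and simplify: expanding $(p^n-1)(p+1)$ and $(p^n-1)(p^{n-1}-1)$ and collecting terms cancels the two $\pm p^n$ contributions and produces the stated numerator $p^{2n-1}+p^{n+1}-p^{n-1}+p^2-p-1$. I expect the only genuinely delicate point to be the rank-two orbit count: one must be careful that the action is on the right and that it is the quotient $\op{GL}(2,\mathbb{F}_p)/\op{SL}(2,\mathbb{F}_p)\cong\mathbb{F}_p^\times$ of size $p-1$ that parametrizes the orbits over a fixed plane, rather than the full set of bases. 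The remaining ingredients---transitivity of $\op{SL}(2,\mathbb{F}_p)$ on nonzero vectors, the subspace counts, and the fact that primality of $p$ is exactly what makes $\ZZ_p$ a field so that this linear algebra is available---are standard.
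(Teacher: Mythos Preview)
Your proof is correct, and it reaches the same formula by a genuinely different route from the paper. The paper argues inductively on $n$: it sets $F(n)=r(\ZZ_p^{n+1})-r(\ZZ_p^n)$, proves a recursion $F(n)=pF(n-1)+p^{2n-2}(p-1)$ by partitioning the new orbits according to the pattern of zeros in the last two rows of the $n\times 2$ matrix, deduces $F(n)=p^{n-1}(p^n+p-1)$, and then telescopes $r_p^n=r_p^1+\sum_{i=1}^{n-1}F(i)$ to obtain the closed form. You instead work directly at a fixed $n$ and stratify the set of $n\times 2$ matrices by the rank of the column span, counting $\op{SL}(2,\mathbb{F}_p)$-orbits on each stratum via the determinant (or wedge) invariant.

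Your stratification is cleaner and more conceptual: it explains \emph{why} the answer looks like $1+\binom{n}{1}_p+(p-1)\binom{n}{2}_p$ without any induction, and the delicate point you flag---that over a fixed plane the orbits are parametrized by $\op{GL}_2/\op{SL}_2\cong\mathbb{F}_p^\times$---is exactly what produces the factor $p-1$. The paper's inductive decomposition, by contrast, tracks how adding a coordinate to $G$ enlarges the orbit set; this gives a recursion that could in principle be adapted to situations where a global rank stratification is less transparent, but here it is more laborious. Both arguments rely essentially on $p$ being prime (yours to have a field so that rank, subspace counts, and the transitivity of $\op{SL}_2$ on nonzero vectors are available; the paper's to invoke $|\op{SL}(2,\ZZ_p)|=p(p^2-1)$ and $|\op{GL}(2,\ZZ_p)|$ in its case~3).
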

\begin{proof}
For $r_p^n:=r(\ZZ_p^n)$, let $F(n)$ be the number $r_p^{n+1}-r_p^n$. We will prove that
\ben\label{from1} F(n)=p^{n-1}(p^n+p-1)\,.\een Since
$r_p^{n}=(r_p^{n}-r_p^{n-1})+(r_p^{n-1}-r_p^{n-2})+...+(r_p^3-r_p^2)+(r_p^2-r_p^1)+r_p^1$,
where $r_p^1=2$. Thus
$r_p^{n}=p^{n-2}(p^{n-1}+p-1)+p^{n-3}(p^{n-2}+p-1)+...+p(p^2+p-1)+(p+p-1)+2$
and as a consequence we have the following equations
\begin{align*}
r_p^{n}&=\sum_{i=0}^{n-2}p^{2i+1}+(p-1)\sum_{i=0}^{n-2}p^i+2\\
&=
p\frac{(p^2)^{n-1}-1}{p^2-1}+(p-1)\frac{p^{n-1}-1}{p-1}+2\\
&=\frac{p^{2n-1}-p+(p^{n-1}-1)(p^2-1)}{p^2-1}\\
&=\frac{p^{2n-1}+p^{n+1}-p^{n-1}+p^2-p-1}{p^2-1}\,.
\end{align*}

We will prove the formula \eqref{from1} by induction, where we use the following
\ben\label{a2} F(n)=pF(n-1)+p^{2n-2}(p-1)\,.\een
We end with the proof of the formula \eqref{a2}. By definition $F(n)$ consists of elements in
$\ZZ_p^{n+1}\times\ZZ_p^{n+1}$ such that the last row
is different from the zero row. There are three cases to consider:
\begin{enumerate}
\item[1)] the representatives of the classes have zeros in the
$n$-coordinate, i.e. of the
form \bena \left(%
\begin{array}{cc}
  \vdots& \vdots\\
   0 &  0\\
   i & j
\end{array}%
\right)\,,\eena with $i,j\neq 0$ at least one, and for this case the number
of classes is $F(n-1)$;

\item[2)] the representatives of the classes that have zeros in the
second column for the last two rows, i.e. of the
form
\bena\label{a123} \left(%
\begin{array}{cc}
  \vdots & \vdots \\
   i & 0 \\
   j & 0
\end{array}%
\right)\,,\eena with $i\neq 0$ and $j\neq 0$. For these elements the stabilizer group is the same, before and after erasing the last column, so we have
$(p-1)F(n-1)$ classes, where we multiply by $p-1$ since we can not take the zero value for $j$; and

 \item[3)] the last case is composed by classes with a representative of the form
\ben\label{a1234} \left(%
\begin{array}{cc}
  \vdots & \vdots\\
  i & 0 \\
  0 & j
\end{array}%
\right)\,,\een
with $i\neq 0$ and $j\neq 0$. Every stabilizer of an
element of the form \eqref{a1234} has to be the identity, then the
classes have as cardinality the order of $\op{SL}(2,\ZZ_p)$, which is
$p(p^2-1)$. Thus the number of classes is given by the product of $p^{n-1}p^{n-1}$ (given by the first $(n-1)$-elements of the two columns) product with $\left|\op{GL}(2,\ZZ_p)\right|$(given by the part $\small{\left(
                                                                         \begin{array}{cc}
                                                                           i & 0 \\
                                                                           0 & j \\
                                                                         \end{array}
                                                                       \right)}\in\op{GL}(2,\ZZ_p)
$ in \eqref{a1234}) and divided by $\left|\op{SL}(2,\ZZ_p)\right|$. Therefore, the number of classes is
$p^{2n-2}(p-1)$. Finally, the sum of the numbers associated to these three
cases, gives equation \eqref{a2} and we have the proof of the theorem.
\end{enumerate}
\end{proof}

\begin{question}
What is the analog of the approach \eqref{e1} for every prime number?
\end{question}
\bibliographystyle{amsalpha}
\bibliography{biblio}
\end{document}